\newtheorem{lemma}{Lemma}  
\newtheorem{theorem}{Theorem}
\newlength{\abc}
\title{\bf On Cross Hyperoperatorial Migration \\ of Properties, Related to Natural \\ Number Division Operator}
\author{V.~Sh.~Tlyusten\footnotemark[1], V.~B.~Tlyachev\footnotemark[2]\\[2mm]
\small \footnotemark[1]~Adygea State University, 385000, Maykop,
Pervomayskaya str. 208, \\ \small 385000, Russia, val\_t@mail.ru \\
\small \footnotemark[2]~Adygea State University, 385000, Maykop,
Pervomayskaya str. 208, \\ \small 385000, Russia, tlyachev@adygnet.ru}
\date{}
\begin{document}
\maketitle
\begin{center}

\begin{abstract}
In the article integer divisibility properties and related prime
factors natural number representation concepts have been defined
over the whole infinite hyperoperation hierarchy. The definitions
have been made across and above of unique arithmetic operations,
composing this hierarchy (addition, multiplication,
exponentiation, tetration and so on). It allows the habitual
concepts of ``prime factor``, "multiplier", "divider", "natural
number factors representation"  etc., to be associated mainly with
the same sense, with the each of those operations. As analogy of
multiplication-based Fundamental Theorem of Arithmetic (FTA), an
exponentiation-based theorem is formulated. The theorem states
that any natural number $M$ can be uniquely represented as a
tower-like exponentiation: $M=a_n\uparrow(a_{n-1}\uparrow (\ldots
(a_2\uparrow a_1)\ldots )),$ where $a_i\neq 1 (i=1,\ldots,n)$ are
primitive in some sense (related to the exponentiation operation),
exponentiation components, following one by one in some unique
order and named in the article as biprimes. 
\end{abstract}

\end{center}

\section{Introduction}

\normalsize
\indent  The English mathematician R.L.~Goodstein is well
known as the author of an interesting and original approach to
constructive mathematical analysis. This approach differs
significantly in the general idea and nature of the basic concepts
from the approaches used by other mathematicians. Goodstein's
approach is closely related to the calculus of equality that he
introduced, which is an axiomatic fragment of the theory of
recursive arithmetic functions, which have some important
advantages. In particular, he used only such algorithms that, by
their definition, always finish work. Functions specifically
designed for this purpose, such as the Goodstein hyperoperator
sequence, extend the basic arithmetic operations well beyond
exponentiation.Goodstein's theoretical constructions are presented
in [1-3]. These works contain a systematic and detailed research
of the Goodstein calculus of primitive-recursive equalities and
research of some modifications of this calculus. With the
Goodstein sequence of hyperoperations, the method of denoting very
large integers, introduced by Donald Knuth in 1976 [4], so called
Knuth's up-arrow notation, is related. This article is devoted to
some problems arising in the theory of Goodstein.

We will start with some auxiliary definitions and denotations.

Let $\mathbb{N}$ be the set of all natural integers,
$\mathbb{N}_0=\mathbb{N}\cup \{0\},$ and $H_r(a,x)$ be a
\textit{hyperoperator} of rank $r (a, x, r \in \mathbb{N}_0),$
defined by following recursive schema:
\begin{equation}\label{eq:num-1}
\begin{array}{ll}
H_0(a,x)=a+x,\\
H_{r+1}(a,0)=1, \, H_{r+1}(a,1)=a,\\
H_{r+1}(a,x+1)=H_r(a,H_{r+1}(a,x)), \, x\geq 1.
\end{array}
\end{equation}

From this definition follows, that for natural numbers, the
notation of \textit{hyperoperator} generalizes and extends the
notation of \textit{arithmetic addition operator}. And it allows
building, on addition operation basis, the infinite sequence of
other homogeneously defined arithmetic operations of arbitrary
higher ranks (as a members of the sequence they can be called
\textit{hyperoperations}). The structure we have chosen for
definition (1) corresponds to the following beginning of
rank-ordered hierarchy of these operations:
\begin{equation}\label{eq:num-2}
H_0(a,x)=a+x, \, H_1(a,x)=a\times x, \, H_2(a,x)=a^x, \ldots .
\end{equation}

Inverse function $H_r^{-1}(a,y)$ for any element $H_r(a,x)$ of
sequence (2) can be defined in such a way that
$H_r^{-1}(a,H_r(a,x)))=x.$ Then, for example, assuming $r=0, 1,
2,$ we get conventional arithmetic operations:
\textit{subtraction} $(y-a),$ \textit{integer division} $(y \; div \;
a)$ and \textit{computing discrete logarithm} $(\log_a y),$
correspondently.


\textbf{Definition 1.} At being given a natural number sequence
(vector) $\overline{\upsilon}=(\upsilon_1,\ldots, \upsilon_n)$ and
any $r\in \mathbb{N}_0,$ we will call $r$--\textit{multiplicator}
(or, multiplicator of rank $r$) of vector $\overline{\upsilon}$ an
operator $\prod^{/r/},$ represented by the following formula:
\begin{equation}\label{eq:num-3}
\Pi^{/r/}\overline{\upsilon}=\Pi_{i=1,n}^{/r/}{\upsilon}_i=
H_r(\upsilon_n,H_r(\upsilon_{n-1},\ldots ,
H_r(\upsilon_2,H_r(\upsilon_1, Sgn(r)))\ldots )).
\end{equation}

Talking about the right part of the formula (3) and assuming $r =
0, 1, 2, \ldots,$ we will also be calling it
$r$--\textit{multiplication} (thus, generalizing by this term
concepts of addition, multiplication, exponentiation and so on).

The significance of the $r$--\textit{multiplication} notion is
that it allows considering hyperoperatorial operations for
operand's vectors of an arbitrary none zero length. It should be
noted also, that operands binding in formula (3) is carried out
from right to left. That's essential in cases when $r>1$ and
associative property may not be true (in \textit{hyperoperator
hierarchy} (2) it's always true for addition and multiplication
only).

Examples:
\begin{equation}
\begin{array}{ll}
\Pi_{i=1,1}^{/0/}{\upsilon}_i=H_0(\upsilon_1,
Sgn(0)))=\upsilon_1+0=\upsilon_1,\\
\Pi_{i=1,3}^{/1/}{\upsilon}_i=H_1(\upsilon_3,H_1(\upsilon_2,H_1(\upsilon_1,Sgn(1))))=\upsilon_3\times(\upsilon_2\times(\upsilon_1\times 1))=\upsilon_3\times(\upsilon_2\times\upsilon_1),\\
\Pi_{i=1,3}^{/2/}{\upsilon}_i=H_2(\upsilon_3,H_2(\upsilon_2,H_2(\upsilon_1,Sgn(2))))=\upsilon_3\uparrow(\upsilon_2\uparrow(\upsilon_1\uparrow
1))=\upsilon_3^{(\upsilon_2^{\upsilon_1})}.
\end{array}\nonumber
\end{equation}

\section{The rank's cross over divisibility}


\textbf{Definition 2.} Let $r\in\mathbb{N}_0$  be the rank of some
hyperoperator $H_r(a,x).$ Any natural number $M > Sgn(r)$ is
called $r$--\textit{decomposable\textit} ($r$--\textit{compound})
if the equation
\begin{equation}\label{eq:num-4}
H_r(a,x)=M
\end{equation}
has a solution $(a_0,x_0)$  over $\mathbb{N},$ where $a_0, x_0
>Sgn(r).$

Otherwise (that is, if there are no such solutions of that
equation), we will call given $M$ $r$--\textit{prime}.

Particularly, in cases when $r = 0, 1, 2,$ we can talk about
additive, multiplicative and exponential \textit{decomposability}
(or, on the contrary, \textit{quantity} to be prime) of number
$M,$ correspondently. In addition, for convenience, instead of
term "2-prime" we will often use in this article a new special
term --- "biprime".

When $r=1$ equation (4) is specified as $a\times x=M,$
$(M,a,x>1),$ and the notion of multiplicative decomposability (or,
on the contrary, of quantity to be $1$-prime) coincides with usual
notion of decomposability (or, on the contrary, of quantity to be
prime) of given number $M.$

For $r = 0$ we will get from (4) the equation $a+x=M,$ $(M,a,x>0),$
which corresponds to trivial statement about additive
decomposability of any natural number $M$ (the only 0-prime in
this case is 1).

Notion of $r$--decomposability of given natural number $M >
Sgn(r),$ particularly, in cases $r = 0, 1, 2,$ is equal by sense
to possibility of representation of this number, correspondently,
as sum, product, power of two non trivial (for production and
exponentiation -- not equal to 1) natural numbers.

The only $0$--prime number, as already noted, is 1;  number 6, for
example, is a biprime (equation $x^y = 6$ has the trivial solution
(6,1) only), while the number 9 can serve as an example of a
bi-decomposable number (since $3^2 = 9$).

Incidentally, we also note that any \textit{prime number} is
\textit{biprime} (the converse statement is obviously not true).


\textbf{Definition 3.} For a given hiperoperator's rank $r,$ two
different natural numbers $a, b$ we will call
$r$--\textit{coprimes} if there isn't exist a natural $d >Sgn(r)$
such that system of equations:
\begin{equation}\label{eq:num-5}
\begin{cases}
H_r(d,x)=a,\\
H_r(d,y)=b,\\
x,y\geq Sgn(r)
\end{cases}
\end{equation}
has at least one solution $(x_0,y_0)$ over $\mathbb{N}$ with
respect to the variables $x, y.$

If $a,b$ are not $r$--\textit{coprimes,} that is, if there exists
a natural number $d >Sgn(r)$ such that system (5) is resolvable
with respect to the variables $x,y,$ then the number $d$ will be
called \textit{common} $r$--\textit{divisor} of $a$ and $b.$

At last, if $d$ is \textit{common} $r$--\textit{divisor} of $a$
and $b,$ wherein $(x_0,Sgn(r))$ is a solution of (5) with respect
of $x, y$ (in that case $d = b$), then $d$ is called
$r$--\textit{divisor} of $a;$ $x_0$ is called quotient of
$r$--\textit{divisor} $a$ by $d$ ($r$--\textit{quotient}) and so
on.

For example, number $3$ is $0-,$ $1-,$ $2-$ \textit{divisor} of
numbers $5,$ $6,$ $9,$ correspondently (since $\textbf{3}+2=5,$
$\textbf{3}\times 2=6,$ $\textbf{3}^2=9$).

The concepts of $r$--\textit{remainder, greatest common}
$r$--\textit{divisor, common} $r$--\textit{multiple} and some
others can be defined in a similarly manner, by extending  of well
known notions, terms and, possibly, some statements, associated
with multiplying and divisibility (that the same, 1-multiplying
and 1-divisibility) of natural numbers to the case of their
r-multiplying  and r-divisibility, where $r\neq 1.$

No doubt that among all categories listed above, the most
interesting one is the category of statements. In fact, it would
be interesting to know what well known theorems on the
divisibility of natural numbers remain valid (or, at least, can be
transformed into similar in meaning their analogues) when going
over to hyperoperator ranks different from the multiplicative one.

The rest of this paper is devoted to the first step in the
direction of possible considerations of this issue. There we "will
try" the Fundamental Theorem of Arithmetic (FTA) to variant of an
exponential decomposition of natural numbers (that is, we will
translate the statement of FTA from the hyperoperator's rank $r=1$
to the rank  $r=2$).

We will conclude the section with the following general remark,
concerning the system of notation we have adopted.

Talking about the properties of $r$--operators, everywhere, if
possible, we will be using the standard mathematical notation,
adding upper $/\textbf{r}/$ marks, where they are needed, to
conventional mathematical designations.  Thus, for instance, on
the base of using of symbol "$|$" as a designator of predicate
"divides", the previous examples of $r$--divisibility can be
written, correspondently, in the form of statements:
$\textbf{3}|^{/0/}5,$ $\textbf{3}|^{/1/}6,$ $\textbf{3}|^{/2/}9.$

\section{The exponential decomposability}

Following the generalized hyperoperator paradigm of the
consideration of basic arithmetic operations, adopted in this
article, as an example of possible cross rank boundary migration
of not only concepts, but also facts and theorems, related to
natural numbers divisibility, let's ask ourselves the question:

If, in accordance with the FTA, any natural number greater than 1
either is a prime itself or can be represented as the product of
prime factors, so that such a representation is unique, up to
(except for) the order of the factors, then what can be said about
the existence of an analogous representation of any natural number
as the bi-product of biprime factors?

The answer to this question is given by the following theorem.

\begin{theorem} \textbf{Theorem (on bi-decomposability).}
Any natural number $M$ greater than 1 either is a \textit{biprime}
itself or can be uniquely represented as a tower-like
exponentiation: $M=a_n\uparrow(a_{n-1}\uparrow(\ldots (a_2\uparrow
a_1)\ldots)),$ where $a_i\neq 1 (i=1,\ldots , n)$ -- are following
one by one in some unique order \textit{biprime} exponentiation
components.
\end{theorem}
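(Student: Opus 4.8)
First I would unwind the notion ``biprime''. By Definition~2 with $r=2$ (so $Sgn(2)=1$), a natural $M>1$ is a biprime exactly when $a^{x}=M$ has no solution with $a,x\ge 2$; in elementary terms $M$ is biprime iff it is not a perfect power, equivalently, writing $M=p_{1}^{e_{1}}\cdots p_{k}^{e_{k}}$ for its prime factorisation, iff $\gcd(e_{1},\dots,e_{k})=1$. It is convenient to regard a single biprime as a ``tower of height $1$'', so that the statement to prove becomes: \emph{every $M>1$ admits one and only one representation $M=a_{n}\uparrow(a_{n-1}\uparrow(\dots\uparrow a_{1})\dots)$ with $n\ge 1$ and all $a_{i}$ biprime.} The plan is to prove existence and uniqueness separately, each by strong induction on $M$, the common engine being a lemma describing when two perfect-power representations of the same number coincide.

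The key lemma is: \emph{if $a,b$ are biprimes and $a^{x}=b^{y}$ for some integers $x,y\ge 1$, then $a=b$ and $x=y$.} To see it, put $g=\gcd(x,y)$, $x=gx'$, $y=gy'$ with $\gcd(x',y')=1$; taking $g$-th roots gives $a^{x'}=b^{y'}=:N$. Since $\gcd(x',y')=1$ and $N$ is simultaneously an $x'$-th and a $y'$-th power, $N$ is an $(x'y')$-th power, say $N=t^{x'y'}$; hence $a=t^{y'}$ and $b=t^{x'}$. As $a$ and $b$ are biprimes (in particular $\ge 2$ and not perfect powers) we must have $t\ge 2$ and $x'=y'=1$, whence $x=y=g$ and $a=t=b$. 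This lemma is the heart of the matter: it shows that once $M>1$ is written as $c^{d}$ with $c$ biprime and $d\ge 1$, the base $c$ and the exponent $d$ are uniquely determined by $M$ (concretely $d=\gcd(e_{1},\dots,e_{k})$ in the notation above), which is precisely what pins down each level of the tower. Note also that here the right-associated tower structure is essential: it forces each exponent to be a single tower rather than a product, so that the gcd argument applies.

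For existence I would argue by strong induction on $M$. If $M$ is biprime we are done with the height-$1$ tower. Otherwise $d:=\gcd(e_{1},\dots,e_{k})\ge 2$; set $c:=p_{1}^{e_{1}/d}\cdots p_{k}^{e_{k}/d}$, so that $c\ge 2$, $\gcd(e_{1}/d,\dots,e_{k}/d)=1$ (so $c$ is a biprime) and $M=c^{d}=c\uparrow d$. Since $M=c^{d}\ge 2^{d}>d$, we have $1<d<M$, so by the induction hypothesis $d=a_{n-1}\uparrow(\dots\uparrow a_{1})$ for some biprime tower of height $n-1\ge 1$; prepending the base $a_{n}:=c$ yields the desired representation $M=a_{n}\uparrow(a_{n-1}\uparrow(\dots\uparrow a_{1})\dots)$.

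For uniqueness, again induct on $M$. If $M$ is biprime it is not a perfect power, hence cannot equal any tower of height $\ge 2$ (such a tower evaluates to $b_{m}^{F}$ with $F\ge 2$), so the only representation is the height-$1$ one. If $M$ is not biprime, every representation has height $\ge 2$; given two of them, write $M=a_{n}^{E}=b_{m}^{F}$ where $E=a_{n-1}\uparrow(\dots\uparrow a_{1})\ge 2$ and $F=b_{m-1}\uparrow(\dots\uparrow b_{1})\ge 2$ are the values of the exponent sub-towers and $a_{n},b_{m}$ are biprimes. The key lemma gives $a_{n}=b_{m}$ and $E=F$; moreover $E=F<M$ since $M\ge 2^{E}>E$, so the induction hypothesis applied to $E$ forces the two sub-towers to coincide, and with $a_{n}=b_{m}$ the two representations of $M$ agree. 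The only genuine difficulty is the key lemma (together with the observation that biprimality is equivalent to ``not a perfect power''); once that is established everything else is the routine strong-induction bookkeeping indicated above, using repeatedly the inequality $c^{d}>d$ to obtain a strictly decreasing induction parameter.
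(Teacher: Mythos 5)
Your proof is correct and follows essentially the same route as the paper: you extract a biprime base from any bi-compound number (the paper's Lemma~1, which you realize explicitly via $d=\gcd(e_1,\dots,e_k)$ rather than by descent on the base), and you rest uniqueness on the incompatibility of powers of distinct biprimes (the paper's Lemma~2, which you state in the slightly stronger form that also yields $x=y$). The only organizational difference is that you run both existence and uniqueness as strong inductions on $M$, whereas the paper peels the two towers term by term with a case analysis; the mathematical content is the same.
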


Postponing for a while the proof of this theorem, we will first
formulate and prove the two lemmas.

\begin{lemma}
Let $D$ is a bi-compound natural integer. Then there are exist
$d_0$--biprime and $D_0\in \mathbb{N} (D_0>1)$ such, that:
\begin{equation}\label{eq:num-6}
D=d_0^{D_0}.
\end{equation}
\end{lemma}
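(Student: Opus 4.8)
The plan is to extract, from an arbitrary exponential representation of $D$, the one with the smallest possible base, and then show that minimality of the base forces it to be a biprime.

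First I would unpack the hypothesis. By Definition 2 with $r=2$ (so that $Sgn(r)=1$), saying that $D$ is bi-compound means precisely that the set
\[
S \;=\; \{\, a \in \mathbb{N} \;:\; a>1 \text{ and } a^x = D \text{ for some } x > 1 \,\}
\]
is non-empty. Since $S \subseteq \mathbb{N}$, the well-ordering principle yields a least element $d_0 \in S$, and by the very definition of $S$ there is some exponent $D_0 > 1$ with $D = d_0^{D_0}$. This is exactly the pair $(d_0, D_0)$ the lemma asks for, so all that remains is to verify that $d_0$ is a biprime.

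Second, I would prove $d_0$ is a biprime by contradiction (equivalently, by infinite descent on the base). Suppose $d_0$ is itself bi-compound; then $d_0 = b^{\,c}$ for some $b,c>1$, and hence
\[
D \;=\; d_0^{\,D_0} \;=\; \bigl(b^{\,c}\bigr)^{D_0} \;=\; b^{\,cD_0},
\]
with $cD_0 > 1$, so $b \in S$. But $b^{\,c} = d_0$ with $c>1$ forces $b < d_0$, contradicting the minimality of $d_0$ in $S$. Therefore $d_0$ admits no decomposition of the form $b^c$ with $b,c>1$, i.e. $d_0$ is $2$-prime, and $D = d_0^{\,D_0}$ with $D_0 > 1$, as required. (One may additionally note that $D_0$ is then the discrete logarithm $H_2^{-1}(d_0, D)$ and is therefore uniquely determined, though the lemma does not use this.)

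I do not expect a real obstacle here. The only two points that need care are (i) reading off correctly from Definition 2 that ``bi-compound'' is exactly the non-emptiness of $S$, keeping track of the convention $Sgn(2)=1$, and (ii) making sure the exponent we produce exceeds $1$ — which is automatic, since every member of $S$ comes equipped with an exponent $>1$ and the descent step only enlarges that exponent. Once these conventions are made explicit, the argument is a one-line appeal to well-ordering (or to infinite descent); the genuinely substantive work is deferred to the second lemma and to the main theorem, where the \emph{uniqueness} of the full exponentiation tower must be established.
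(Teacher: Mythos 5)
Your proof is correct and uses essentially the same idea as the paper: descend on the base of an exponential representation of $D$ until it can no longer be decomposed. The only cosmetic difference is that you package the descent as a single appeal to the well-ordering principle (picking the minimal base in $S$), whereas the paper constructs the decreasing sequence $d_1>d_2>\cdots$ explicitly and tracks the accumulated exponent $D_s=c_2c_3\cdots c_sD_1$; both arguments are sound and equivalent.
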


\begin{proof}[Proof of Lemma 1.]
So far as $D$ is a bi-compound natural integer, it can be
represented as:
\begin{equation}\label{eq:num-7}
D=d_1^{D_1}.
\end{equation}
where $d_1,$ $D_1\in \mathbb{N},$ $d_1>1,$ $D_1>1.$ Here two cases
are possible:

1) Number $d_1$ is biprime. Then in the formula (6) we set
$d_0=d_1,$ $D_0=D_1$ and stop.  For this case the lemma is proved.

2) Number $d_1$ is bi-compound. Then for some $c_2\in\mathbb{N}$
$(c_2>1)$ we have $d_1=d_2^{c_2},$ where $1<d_2<d_1.$

Substituting this expression for $d_1$ in (7) we obtain:
\begin{equation}\label{eq:num-8}
D=d_2^{c_2^{D_1}}=d_2^{D_2}.
\end{equation}

If $d_2$ is biprime then in (6) we assume $d_0=d_2,$ $D_0=D_2,$
and lemma is proved.  Otherwise there again can be found $c_3 \in
\mathbb{N}$ $(c_3 > 1)$ and $1<d_3<d_2,$ such that
$d_2=d_3^{c_3}.$ Having substituted $d_2$ in (8), we will obtain
$D=d_3^{c_3^{D_2}}=d_3^{D_3}.$

Continuing this process further and constructing a sequence of
decreasing but staying greater than $1$ elements $d_1,$
$d_2,\ldots ,$ we will necessarily stop and come to some element
$d_s,$ which turns out to be biprime.

The final result of this process is the exponent
$D=d_s^{c_sD_{s-1}}=d_s^{c_2c_3...c_sD1}=d_s^{D_s},$ where is
$d_s$ biprime, $D_s>1.$ Supposing at the exponent $d_0=d_s,$
$D_0=D_s$ we will obtain from it the expression (6).

The lemma 1 is proved for this case too.
\end{proof}

\begin{lemma} For arbitrarily chosen
different biprimes $a,$ $b,$ the exponential Diophantine equation
$a^x=b^y$ is not solvable with respect to  $x, y.$
\end{lemma}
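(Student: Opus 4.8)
The plan is to argue by contradiction, reducing everything to the ordinary (multiplicative) Fundamental Theorem of Arithmetic applied to the common value $N=a^{x}=b^{y}$. Note first that $x,y\ge 1$ (they range over $\mathbb{N}$) and $a,b\ge 2$ (a biprime exceeds $Sgn(2)=1$), so $N\ge 2$. The first step I would take is to reduce to the case $\gcd(x,y)=1$: writing $g=\gcd(x,y)$, $x=gx'$, $y=gy'$, the hypothetical relation becomes $\bigl(a^{x'}\bigr)^{g}=\bigl(b^{y'}\bigr)^{g}$, and since positive integer $g$-th roots are unique this gives $a^{x'}=b^{y'}$ with $\gcd(x',y')=1$. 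Hence it suffices to derive a contradiction assuming $a^{x}=b^{y}=N$ with $\gcd(x,y)=1$.

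Next I would compare $p$-adic valuations. Fixing a prime $p$ and letting $v_{p}(\cdot)$ denote the exponent of $p$ in the canonical factorization, the equality $a^{x}=b^{y}$ forces $x\cdot v_{p}(a)=y\cdot v_{p}(b)$ for every $p$. Since $\gcd(x,y)=1$, from $y\mid x\,v_{p}(a)$ we get $y\mid v_{p}(a)$, and symmetrically $x\mid v_{p}(b)$, and this holds for all primes $p$. Because $y$ divides every exponent occurring in the prime factorization of $a$, the number $a$ is a perfect $y$-th power, $a=c^{y}$ for some $c\in\mathbb{N}$; likewise $b=c'^{\,x}$ for some $c'\in\mathbb{N}$.

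Now I would invoke the hypothesis that $a$ and $b$ are biprimes. If $y>1$, then, since $a>1$ forces $c>1$, the representation $a=c^{y}$ with $c>1$ and $y>1$ shows $a$ to be bi-compound, contradicting that $a$ is a biprime; hence $y=1$, and by the same reasoning $x=1$. But then $a^{x}=b^{y}$ collapses to $a=b$, contradicting the choice of $a$ and $b$ as \emph{different} biprimes. This finishes the proof.

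The only place that needs a little care is the opening reduction and the bookkeeping of degenerate exponents: one must be sure that $x$ and $y$ are genuinely positive (which is automatic here as they lie in $\mathbb{N}$) and that $a,b\ge 2$, so that the extracted $y$-th and $x$-th roots are $>1$ and therefore do witness bi-compositeness once the exponent exceeds $1$. Beyond that, the argument is the standard valuation computation, so I do not anticipate a substantive obstacle; the step I would present most carefully is the passage from ``$y$ divides every $v_{p}(a)$'' to ``$a$ is a $y$-th power'', which is exactly where unique factorization is used.
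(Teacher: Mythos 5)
Your proof is correct and takes essentially the same route as the paper's: both compare the prime-factorization exponents of the common value $a^{x}=b^{y}$, derive the identity $x\,v_{p}(a)=y\,v_{p}(b)$ for every prime $p$, and use unique factorization to exhibit $a$ and $b$ as perfect powers, contradicting biprimality. The only difference is bookkeeping --- the paper normalizes by the least common multiple of $x,y$ to get a common base $d$ with $a=d^{q}$, $b=d^{r}$, while you normalize by the gcd and conclude $x=y=1$ hence $a=b$; your version is marginally cleaner in the degenerate case (when $q=1$ or $r=1$ the paper's final ``contradicts biprimality'' must silently fall back on $a\neq b$).
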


\begin{proof}[Proof of Lemma 2.]
In the conditions of the above restrictions on $a$ and $b,$ let us
admit the opposite, that is, assume that the equation $a^x=b^y$
has at least one solution $(x_0,y_0).$ We have:
\begin{equation}\label{eq:num-9}
a^{x_0}=b^{y_0}=C.
\end{equation}

Let $C = \Pi_{i=1}^n p_i^{c_i},$ where $p_i$ --- different prime
numbers, and $c_i (c_i\in \mathbb{N})$ are their powers. Then,
taking in consideration 9, we can write down:
\begin{equation}\label{eq:num-10}
a = \Pi_{i=1}^n p_i^{\alpha_i}, b = \Pi_{i=1}^n p_i^{\beta_i},
\end{equation}
\begin{equation}\label{eq:num-11}
\Rightarrow a^{x_0}= \Pi_{i=1}^n p_i^{\alpha_i x_0}, b^{y_0} =
\Pi_{i=1}^n p_i^{\beta_i y_0},
\end{equation}
\begin{equation}\label{eq:num-12}
\Rightarrow  \alpha_i x_0=\beta_i y_0, \, (i=1,2,...,n).
\end{equation}

After denoting the least common multiple of numbers $x_0, y_0$  as
$I=LCM(x_0,y_0),$ from (11, 12) can be deduced: $\alpha_i
x_0=Ik_i$, $\beta_i y_0=Ik_i,$ where $I = qx_0,$ $I = ry_0$ and
$q,$ $r,$ $k_i\in \mathbb{N}.$ That is, we have: $\alpha_i x_0=q
x_0k_i$ $\Rightarrow \alpha_i=qk_i;$ $\beta_i y_0=ry_0k_i$
$\Rightarrow \beta_i=rk_i.$

Substituting the last expressions for $\alpha_i$ and $\beta_i$
into (10) gives:
\begin{equation}\label{eq:num-13}
a= \Pi_{i=1}^n p_i^{q k_i}, \, b=\Pi_{i=1}^n p_i^{r k_i}.
\end{equation}

At last, denoting $d=\Pi_{i=1}^n p_i^{k_i},$ from the formula
(13), we deduce $a=d^{\, q},$ $b=d^{\,r},$ which contradicts the
initial assumption that $a,$ $b$ are biprimes. This contradiction
proves the lemma 2.
\end{proof}

\begin{proof}[Proof of the Theorem 1.]
Let $M>1$ be a natural number mentioned in the hypothesis of the
Theorem 1. Here we are going to show that there exists a
bi-factorization of $M$ into a sequence of biprime components.

If $M$ is biprime, then our goal is trivially reached. Otherwise,
by lemma 1, we can write: $M=M_0=a_1^{M_1},$ where $a_1$ is
biprime, $M_1\in \mathbb{N},$ $1<M_1<M_0.$

If $M_1$ is biprime, then the required sequence of biprime
components is found. Otherwise, we will write down again:
$M_1=a_2^{M_2},$ where $a_2$ is biprime, $M_2\in \mathbb{N},$
$1<M_2<M_1.$

Continuing this process further, we obtain a decreasing sequence
of natural numbers $M_0>M_1>M_2>... ,$ which, because of its
boundedness bellow, at some point breaks off on the element $M_s$
such that $M_s>1$ and $M_s$ is non bi-compound. Thus $M_s$ turns
out to be biprime. Renaming $M_s=a_s,$ we get the desired
bi-factorization:
\begin{equation}\label{eq:num-14}
M=a_1\uparrow (a_2\uparrow(\ldots (a_{s-1}\uparrow a_s)\ldots )).
\end{equation}
or, the same in common notation (3):
\begin{equation}\label{eq:num-15}
M= \Pi_{i=s,1}^{/2/} a_i=H_2(a_1, H_2(a_2,\ldots ,H_2(a_{s-1},
H_2(a_s, Sgn(2)))\ldots )).
\end{equation}

2. Now we will show the uniqueness of bi-factorization (14). Suppose
it's not unique, that is, can be found at least two different
sequences of primes $(a_1,a_2,\ldots , a_{s_1})$ and
$(b_1,b_2,\ldots ,b_{s_2})$ such, that:
\begin{equation}\label{eq:num-16}
M=a_1\uparrow (a_2\uparrow(\ldots (a_{s_1-1}\uparrow
a_{s_1})\ldots ))=b_1\uparrow (b_2\uparrow(\ldots
(b_{s_2-1}\uparrow b_{s_2})\ldots )).
\end{equation}

Let $t = min (s_1,s_2).$ The following cases may be occurred:

A) $\exists e (1\leq e\leq t, a_e\neq b_e)$ and $\forall i
(i=\overline{1,e-1}, a_i=b_i).$ In this case we obtain from (16)
either the equality $a_e^{M_1}=b_e^{M_2}$ ($a_e,$ $b_e$ are
biprimes, $M_1>1,$ $M_2>1$), which, by the hypothesis of lemma
2, is impossible, or one of the two equalities: $a_e^M=b_e,$ or
$b_e^M=a_e$ ($a_e,$ $b_e$ are biprimes, $M>1$). The last two
equalities, due to the fact that a biprime cannot coincide with a
bi-compound number, are obviously also not possible.

B) $a_i=b_i,$ $(i=\overline{1,t})$. In this case, from (16) we
obtain a numerical equality of the form:
\begin{equation}\label{eq:num-17}
a_1\uparrow (a_2\uparrow(\ldots (a_t\uparrow M)\ldots
))=a_1\uparrow (a_2\uparrow(\ldots (a_{t-1}\uparrow a_t)\ldots )),
\,  where \, M>1.
\end{equation}
But the last equality is, obviously, also not true (value of
expression in the left part is greater than value of expression in
the right one).

Thus, the equality (16) is possible only if the expression of its
left part exactly coincide with the expression of its right part.
The proof of the uniqueness of bi-factorization and the theorem as
a whole are completed.
\end{proof}

\section{Conclusion, Open Problems}

In this article we have considered the possibility of expanding
the system of concepts based on the multiplication and division of
natural numbers, in such a way as it would cover the entire
infinite hierarchy of hyperoperatorial arithmetic operations. As
an example and object for the first step, illustrating the
author's approach to such an extension, a hyperoperator of
exponentiation was chosen.

Basic notions for this case are introduced: a \textit{bi-product}
and \textit{biprime} ones. In particular, the latter is understood
to mean a natural number that cannot be represented as the degree
of two nontrivial (not equal to 1) natural numbers.

For an arithmetical hyperoperator of exponentiation, as an analog
of the Fundamental Theorem of Arithmetic, a theorem on exponential
(tower-like) decomposability of natural numbers is formulated and
proved. The theorem states that any natural number (not equal to
1) is either \textit{biprime} or can be uniquely represented as a
\textit{bi-product} of following in some order \textit{biprimes.}

From the theorem, in particular, it follows (in the article this
fact is stated and proved in the form of a lemma), that for
arbitrarily chosen, different biprimes $a,$ $b,$ the exponential
Diophantine equation $a\uparrow x = b\uparrow y$ is not solvable
with respect to the $x,$ $y.$

There are two main directions on which this study could be
continued.

1. Further development of the theory of bi-decomposability and
biprime numbers; search for possible applications of this theory.

2. Extending of system of those the classical concepts, facts and
statements related to the multiplication and divisibility of
natural numbers that admit their natural interpretation throughout
the hierarchy of hyperoperatorial arithmetic operations.

In connection with the second of the two directions of research
mentioned above, it would be interesting, in particular, the issue
of the validity of the following generalized version of the proved
in this article theorem (here we will express this version in the
form of a hypothesis which it seems to be correct).

\textit{Hypothesis (on $r$--decomposability)}. For a given natural
$r\neq 1,$ any natural number $M>1$ is either $r$--simple in
itself or it can be uniquely represented as an $r$--product of
$r$--prime components that follow in a certain order.

\end{document}